\documentclass{article}
\usepackage{graphicx,amsmath,amssymb,amsthm,xcolor,hyperref} 

\everymath{\displaystyle}

\newtheorem{thm}{Theorem}[section]
\newtheorem{lemma}[thm]{Lemma}
\newtheorem{cor}[thm]{Corollary}
\renewcommand{\arraystretch}{1.5}

\title{Analytic summation of series involving higher-order derivatives of Chebyshev polynomials of the second kind and their applications to convolved linear recurrent sequences}
\author{Khamitov V., Dmytryshyn D.,  Gray D., and Stokolos A.}
\date{March 2026}

\begin{document}

\maketitle

\abstract{This paper considers functional series whose terms are higher-order derivatives of Chebyshev
polynomials of the second kind, where the degree of the polynomial is related to the order of the
derivative. Analytic summation is used to determine the rational functions to which these series
converge. These functions are expressed in terms of Chebyshev polynomials evaluated at a
specific argument. Connections are established between derivatives of Chebyshev polynomials
of the second kind and special numerical sequences generated by linear recurrence relations.
New closed-form formulas are obtained for the sums of the series at various values of the
argument. As consequences, combinatorial identities are derived for the Fibonacci, Lucas, and
Pell numbers, for sections of the Fibonacci sequence, and for their convolutions. By means of
analytic continuation, sums of formally divergent series are obtained, which in special cases
correspond to the classical Euler formulas.

{\bf Keywords. Chebyshev polynomials of the second kind, derivatives of Chebyshev polynomials of the
second kind, Gegenbauer polynomials, functional series, analytic continuation, Fibonacci numbers, Lucas
numbers, Pell numbers, convolved Fibonacci numbers, convolved Pell numbers, convolved k-sections of the
Fibonacci sequence.
}}

\section{Introduction and Problem Statement}

In \cite{ref1, ref2, ref3}, it was noted that identifying internal connections among Chebyshev polynomials of the second kind, their derivatives, and analytic functions is an important and interesting problem and, moreover, that the results in this direction are clearly insufficient. This paper considers specific series associated with derivatives of Chebyshev polynomials of the second kind, in which the degree of the polynomial is related to the order of the derivative, and which converge to rational functions. These functions are expressed through Chebyshev polynomials with a composite argument. Consequently, various identities are obtained for series related to the Fibonacci, Lucas, and Pell numbers, to the convolved Fibonacci and Pell numbers, and to the $k$-section numbers of the Fibonacci sequence. Further consequences concern the summation of formally divergent series by analytic continuation to the boundary of the domain of convergence of these series.

For every integer $N \geq 0$, the family of Chebyshev polynomials of the second kind $U_N(x)$ is defined as follows; see, for example, \cite{ref1}:
\begin{equation}\label{eq1}
U_N(z) = \sum_{j = 0}^{\left[\frac{N}{2}\right]}(-1)^j{N - j \choose j}(2z)^{N - 2j},
\end{equation}
where
$$
\left[\frac{N}{2}\right] = \begin{cases}
\frac{N}{2}, & N \mbox{ is even} \\
\\
\frac{N-1}{2}, & N \mbox{ is odd}
\end{cases}, \qquad
{N - j \choose j} = \frac{(N - j)!}{j!(N - 2j)!}.
$$
For $N < 0$, the family of Chebyshev polynomials of the second kind can be defined by the formula $U_{-N}(z) = -U_{N - 2}(z)$.

Formula \eqref{eq1} makes it possible to define the polynomials $U_N(z)$ for all complex $z$, and also yields explicit formulas for derivatives of order $s$ ($0 \leq s \leq N$) of the Chebyshev polynomials of the second kind:
$$
U_N^{(s)}(z) = s!\sum_{j = 0}^{\left[\frac{N - s}{2}\right]}(-1)^j{N - j \choose j}{N - 2j \choose s}2^{N - 2j}z^{N - 2j - s}.
$$

Formulas connecting derivatives of Chebyshev polynomials of the second kind to the Chebyshev polynomials of the second kind themselves can be found in \cite{ref1,ref2,ref3,ref4}.

The main objective of the present work is: 
\begin{itemize}
\item to derive formulas for the sums of the series
$$
\sum_{\sigma = 0}^{\infty}\frac{z^\sigma U_{N + \sigma - 1}^{(\sigma)}(z)}{2^\sigma \sigma!}, \quad |z| < 1 \quad \mbox{ and } \quad \sum_{\sigma = 0}^{\infty}\frac{z^{-\sigma} U_{N + \sigma - 1}^{(\sigma)}(z)}{2^\sigma \sigma!}, \quad |z| > 1;
$$
\item to obtain special formulas for certain linear recurrent sequences; 
\item and to derive summation formulas for certain divergent series by analytic continuation of the resulting rational functions.
\end{itemize}

The relevance of the present work is due to the growing interest in the search for analytic representations of various types of sums involving orthogonal polynomials, in particular Chebyshev polynomials of the second kind. These polynomials are widely used in approximation theory, numerical methods, and cryptography \cite{ref5}. However, whereas the properties of the polynomials themselves have been studied in detail, series involving their higher-order derivatives remain less investigated \cite{ref6}. One of the motivations for writing this article is the possibility of extending properties of derivatives $U_{N + s - 1}^{(s)}(z)$ to number theory. Establishing exact values for the sums of such series not only makes it possible to discover new properties of convolved recurrent sequences, but also provides tools for summing certain divergent series in the sense of analytic continuation.

\section{Preliminary Results}

Let us determine the connection between derivatives of Chebyshev polynomials of the second kind $U_{N + s - 1}^{(s)}(z)$ and the Fibonacci, Lucas, and Pell numbers, the $k$-section of the Fibonacci sequence, the convolved Fibonacci numbers, the convolved Pell numbers, and the convolved $k$-section of the Fibonacci sequence \cite{ref7, ref8}. We present some facts from the theory of Fibonacci numbers.

The Fibonacci sequence $\{F_N\}_{N = 1}^{\infty}$ is defined by the recurrence relations
$$
x_{N + 2} = x_{N + 1} + x_N, \quad N = 1,2,\ldots,
$$
and the initial conditions $x_1 = 1$, $x_2 = 1$. In turn, the Lucas sequence $\{L_N\}_{N = 1}^{\infty}$ is defined by the same relations and the initial conditions $x_1 = 1$, $x_2 = 3$. The Pell sequence $\{P_N\}_{N = 1}^{\infty}$ is defined by a different linear recurrence relation
$$
x_{N + 2} = 2x_{N + 1} + x_N, \quad N = 1,2,\ldots,
$$
and the initial conditions $x_1 = 1$, $x_2 = 2$.

Denote $\Phi_{N,k} = F_{Nk}/F_k$, $N = 1,2, \ldots$, $k = 1,2, \ldots$. For each $k = 1,2,\ldots,$ the sequence $\{\Phi_{N,k}\}_{N = 1}^{\infty}$ is defined by the recurrence relation
$$
x_{N + 2} = L_kx_{N+1} - (-1)^kx_N, \qquad N = 1,2,\ldots,
$$
and the initial conditions $x_1 = 1$, $x_2 = L_k$ \cite{ref8,ref9}, or directly in terms of the Lucas numbers \cite{ref10}
$$
\Phi_{N,k} = \sum_{j = 0}^{\left[\frac{N-1}{2}\right]}(-1)^{(k-1)j}{N - 1 - j \choose j}(L_k)^{N - 1 - 2j}.
$$
We shall call this sequence $\{\Phi_{N,k}\}_{N = 1}^{\infty}$ the $k$-section of the Fibonacci sequence.

The following generalization of sequences is associated with convolution operations. The convolved Fibonacci number is defined as follows \cite{ref7, ref10}:
\begin{equation}\label{eq2}
F_N^{(0)} = F_N, \quad F_N^{(s)} = \sum_{j = 0}^{N - 1}F_{j + 1}F_{N - j}^{(s - 1)}, \quad s = 1,2,3,\ldots.
\end{equation}

It follows from formula \eqref{eq2} that 
\begin{eqnarray*}
F_1^{(s)} &=& 1, \\
F_2^{(s)} &=& s + 1, \\
F_3^{(s)} &=& \frac{1}{2}(s + 1)(s + 4), \\
F_4^{(s)} &=& \frac{1}{6}(s + 1)(s + 2)(s + 9), \\
F_5^{(s)} &=& \frac{1}{24}(s + 1)(s + 2)(s + 4)(s + 15),
\end{eqnarray*}
and so on.

A formula relating convolved Fibonacci numbers is presented in \cite[formula (18)]{ref7}.

The convolved Pell number is defined analogously to \eqref{eq2}:
$$
P_N^{(0)} = P_N, \quad P_N^{(s)} = \sum_{j = 0}^{N - 1}P_{j + 1}P_{N - j}^{(s - 1)}, \quad s = 1,2,3,\ldots.
$$
Note that
\begin{eqnarray*}
P_1^{(s)} &=& 1, \\
P_2^{(s)} &=& 2(s + 1), \\
P_3^{(s)} &=& (s + 1)(2s + 5), \\
P_4^{(s)} &=& \frac{2}{3}(s + 1)(s + 2)(2s + 9), \\
P_5^{(s)} &=& \frac{1}{6}(s + 1)(s + 2)(4s^2 + 40s + 87).
\end{eqnarray*}

For the $k$-sections of the Fibonacci sequence, one can also define convolved numbers:
$$
\Phi_{N,k}^{(0)} = \Phi_{N,k}, \quad \Phi_{N,k}^{(s)} = \sum_{j = 0}^{N - 1} \Phi_{j + 1,k}\Phi_{N - j,k}^{(s - 1)}, \quad s = 1,2,3,\ldots. 
$$

Since $\Phi_{N,1} = F_N$, we have $\Phi_{N,1}^{(s)} = F_{N}^{(s)}$.

We also note that
\begin{eqnarray*}
\Phi_{1,k}^{(s)} &=& 1, \\
\Phi_{2,k}^{(s)} &=& (s + 1)L_k, \\
\Phi_{3,k}^{(s)} &=& \frac{1}{2}(s + 1)(s + 2)(L_k)^2 + (s + 1)(-1)^k, \\
\Phi_{4,k}^{(s)} &=& \frac{1}{2}(s + 1)(s + 2)(s + 3)(L_k)^3 - (s + 1)(s + 2)(-1)^kL_k. \\
\end{eqnarray*}

There exist formulas relating the $k$-sections of the Fibonacci sequence to the Fibonacci numbers and to the Lucas numbers \cite[formulas (21), (22)]{ref7}.

Formulas relating the derivatives of the Chebyshev polynomials of the second kind to Fibonacci numbers, Lucas numbers, convolved Fibonacci numbers, and the $k$-sections of the Fibonacci sequence are given in \cite{ref7,ref8}:
$$
F_N = (-i)^{N - 1}U_{N - 1}\left(\frac{i}{2}\right),
$$
$$
\Phi_{N,k} = \frac{F_{Nk}}{F_k} = \begin{cases}
(-i)^{N - 1}U_{N - 1}\left(\frac{i}{2}L_k\right), & k \mbox{ is odd} \\
U_{N - 1}\left(\frac{1}{2}L_k\right), & k \mbox{ is even}
\end{cases},
$$
\vspace{2mm}
\begin{equation}\label{eq3}
F_N^{(s)} = \frac{(-i)^{N - 1}}{2^s s!}U_{N + s - 1}^{(s)}\left(\frac{i}{2}\right),
\end{equation}
\vspace{2mm}
\begin{equation}\label{eq4}
\Phi_{N,k}^{(s)} = \frac{1}{2^s s!}\begin{cases}
(-i)^{N - 1}U_{N + s - 1}^{(s)}\left(\frac{i}{2}L_k\right), & k \mbox{ is odd} \\
U_{N + s - 1}^{(s)}\left(\frac{1}{2}L_k\right), & k \mbox{ is even}
\end{cases}
\end{equation}
\vspace{2mm}
\begin{lemma}\label{lem1}
For convolved Pell numbers, the following relation holds
\begin{equation}\label{eq5}
P_N^{(s)} = \frac{(-i)^{N - 1}}{2^s s!}U_{N + s - 1}^{(s)}(i).
\end{equation}
\end{lemma}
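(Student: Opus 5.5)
We prove \eqref{eq5} with generating functions, comparing the generating function of the convolved Pell numbers with that of the derivatives $U^{(s)}_{N+s-1}$.

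\textbf{Step 1: Pell generating function.} From the recurrence $P_{N+2}=2P_{N+1}+P_N$ with $P_1=1$, $P_2=2$ we get
$$
\sum_{N\ge1}P_N t^{N-1}=\frac{1}{1-2t-t^2}.
$$
The definition of $P_N^{(s)}$ is a Cauchy product in the shifted index $N-1$. By induction on $s$,
$$
\sum_{N\ge1}P_N^{(s)}t^{N-1}=\frac{1}{(1-2t-t^2)^{s+1}}.
$$

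\textbf{Step 2: generating function of the derivatives.} The classical identity $\sum_{n\ge0}U_n(z)t^n=(1-2zt+t^2)^{-1}$ follows from \eqref{eq1} by expanding $\bigl(1-t(2z-t)\bigr)^{-1}$ and collecting powers. Differentiate it $s$ times with respect to $z$ and divide by $s!$:
$$
\sum_{n\ge0}\frac{U_n^{(s)}(z)}{s!}t^n=\frac{(2t)^s}{(1-2zt+t^2)^{s+1}}.
$$
Since $U_n^{(s)}=0$ for $n<s$, we may put $n=N+s-1$ with $N\ge1$, cancel $t^s$, and obtain
$$
\sum_{N\ge1}\frac{U_{N+s-1}^{(s)}(z)}{2^s s!}t^{N-1}=\frac{1}{(1-2zt+t^2)^{s+1}}.
$$

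\textbf{Step 3: specialisation.} Set $z=i$ and replace $t$ by $-it$. Then $1-2i(-it)+(-it)^2=1-2t-t^2$, so
$$
\sum_{N\ge1}\frac{(-i)^{N-1}U_{N+s-1}^{(s)}(i)}{2^s s!}t^{N-1}=\frac{1}{(1-2t-t^2)^{s+1}}.
$$
Comparing coefficients with Step 1 gives \eqref{eq5}. The argument is the same as the one behind \eqref{eq3}, with $z=i/2$ replaced by $z=i$.

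The only step needing care is the sign bookkeeping in the substitution $t\mapsto -it$, that is, checking $2zt\mapsto 2t$ and $t^2\mapsto -t^2$. If termwise differentiation in $z$ should be avoided, we can work entirely with formal power series in $t$ whose coefficients are polynomials in $z$. As an alternative route, one can expand the explicit formula for $U_N^{(s)}$ at $z=i$ and compare it with the coefficient extraction from $(1-2t-t^2)^{-s-1}$, but the generating-function argument is cleaner.
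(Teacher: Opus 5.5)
Your proposal is correct and follows essentially the paper's argument: you write the convolved Pell generating function as a power of the Pell generating function, differentiate the Chebyshev generating function $s$ times, and then specialize. You also make explicit the substitution $z=i$, $t\mapsto -it$ (turning $1-2zt+t^2$ into $1-2t-t^2$ and producing the factor $(-i)^{N-1}$), which the paper leaves implicit in its final ``Therefore''.
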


\begin{proof}
The function $p(z) = \frac{z}{1 - 2z - z^2}$ is the generating function for the Pell sequence \cite{ref8}, i.e.
$$
\frac{z}{1 - 2z - z^2} = \sum_{j = 1}^{\infty}P_j z^j.
$$
For the convolved Pell numbers $P_N^{(s)}$, the generating function is defined as
$$
p^{(s)}(z) = \frac{z}{(1 - 2z - z^2)^{s + 1}}.
$$
Indeed, using \cite[p. 216]{ref6}, we obtain
$$
\frac{z}{1 - 2z - z^2} \cdot \frac{1}{1 - 2z - z^2} = \sum_{j = 1}^{\infty}\left(\sum_{\ell = 0}^{j - 1}P_{\ell + 1}P_{j - \ell}\right)z^j = \sum_{j = 1}^{\infty}P_j^{(1)}z^j,
$$
$$
\frac{z}{(1 - 2z - z^2)^s} \cdot \frac{1}{1 - 2z - z^2} = \sum_{j = 1}^{\infty}\left(\sum_{\ell = 0}^{j - 1}P_{\ell + 1}P_{j - \ell}^{(s - 1)}\right)z^j = \sum_{j = 1}^{\infty}P_j^{(s)}z^j.
$$
The function $g(z,t) = \frac{1}{1 - 2tz + z^2}$ is the generating function for the Chebyshev polynomials of the second kind $U_N(t)$ \cite{ref5}, so that
$$
\frac{1}{1 - 2tz + z^2} = \sum_{j = 0}^{\infty}U_j(t)z^j.
$$
Let us determine the higher-order derivatives of $g(z,t)$.
\begin{eqnarray*}
\frac{\partial^s}{\partial t^s}g(z,t) &=& 2^s s!\frac{z^s}{(1 - 2tz + z^2)^{s + 1}} \\
&=& \sum_{j = 0}^{\infty}U_j^{(s)}(t)z^j \\
&=& \sum_{j = s}^{\infty}U_j^{(s)}(t)z^j \\
&=& \sum_{j = 1}^{\infty}U_{j + s - 1}^{(s)}(t)z^{j + s - 1} \\
& & (U_j^{(s)}(t) = 0, \quad j < s).
\end{eqnarray*}
Then,
$$
 \frac{z}{(1 - 2tz + z^2)^{s + 1}} = \frac{1}{2^s s!}\sum_{j = 1}^{\infty}U_{j + s - 1}^{(s)}(t) z^j.
$$
Therefore, $P_N^{(s)} = \frac{(-i)^{N - 1}}{2^s s!}U_{N + s - 1}^{(s)}(i)$.
\end{proof}

Next, series of the forms
\begin{itemize}
\item $\displaystyle \sum_{\sigma = 0}^{\infty} \frac{z^{\pm \sigma}U_{N + \sigma - 1}^{(\sigma)}(z)}{2^\sigma \sigma!}$,
\item $\displaystyle \sum_{\sigma = 0}^{\infty} (i)^{\sigma}2^{-\sigma}F_N^{\sigma}$,
\item $\displaystyle \sum_{\sigma = 0}^{\infty}\left(\frac{2}{L_k}\right)^\sigma \Phi_{N,k}^{(\sigma)}$,
\item and $\displaystyle \sum_{\sigma = 0}^{\infty}\left(\frac{-2i}{L_k}\right)^\sigma \Phi_{N,k}^{(\sigma)}$
\end{itemize}
will be studied. As consequences of the obtained formulas, formally divergent series will also be considered; for example,
$$
\sum_{\sigma = 0}^{\infty}(i)^\sigma P_N^{(\sigma)}, \qquad \sum_{\sigma = 0}^{\infty}(-1)^\sigma{N + 2\sigma \choose N - 1}.
$$

\section{Main Results}

Denote $A_{N,s,j} = {N - 1 + s - j \choose j}{N - 1 + s - 2j \choose s}2^{N - 1 - 2j}$. Then,
\begin{equation}\label{eq6}
\frac{1}{2^s s!}U_{N + s - 1}^{(s)}(z) = \sum_{j = 0}^{\left[\frac{N - 1}{2}\right]}(-1)^jA_{N,s,j}z^{N-1-2j}.
\end{equation}

\begin{lemma}\label{lem2}
For all $j = 0, 1, \ldots, \left[\frac{N-1}{2}\right]$ and $|z| < 1$, the representation
\begin{equation}\label{eq7}
\sum_{\sigma = 0}^{\infty}A_{N,\sigma,j}z^\sigma = {N - 1 - j \choose j}\frac{2^{N - 1 - 2j}}{(1 - z)^{N - j}}
\end{equation}
holds.
\end{lemma}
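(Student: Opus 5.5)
The plan is to simplify the product of binomial coefficients in $A_{N,\sigma,j}$ so that the dependence on $\sigma$ sits in a single binomial coefficient, and then apply the negative binomial series. Fix $j$ with $0\le j\le\left[\frac{N-1}{2}\right]$ and put $m=N-1-2j\ge 0$. Then
$$
A_{N,\sigma,j}={m+\sigma+j \choose j}{m+\sigma \choose \sigma}2^{m}.
$$
Writing both coefficients as factorials gives
$$
{m+\sigma+j \choose j}{m+\sigma \choose \sigma}=\frac{(m+\sigma+j)!}{j!\,(m+\sigma)!}\cdot\frac{(m+\sigma)!}{\sigma!\,m!}=\frac{(m+j+\sigma)!}{j!\,m!\,\sigma!}.
$$
I will then split this as
$$
\frac{(m+j)!}{j!\,m!}\cdot\frac{(m+j+\sigma)!}{(m+j)!\,\sigma!}={m+j \choose j}{m+j+\sigma \choose \sigma}.
$$
Since $m+j=N-1-j$, this is
$$
A_{N,\sigma,j}=2^{N-1-2j}{N-1-j \choose j}{N-1-j+\sigma \choose \sigma}.
$$
This is the trinomial-revision identity. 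I expect it to be the only real step, and it is a direct factorial check.

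The first two factors do not depend on $\sigma$, so they come out of the sum. What remains is
$$
\sum_{\sigma=0}^{\infty}{N-1-j+\sigma \choose \sigma}z^\sigma.
$$
For $|z|<1$, the standard negative binomial expansion $\sum_{\sigma\ge0}{r+\sigma\choose\sigma}z^\sigma=(1-z)^{-(r+1)}$ with $r=N-1-j\ge 0$ evaluates this to $(1-z)^{-(N-j)}$. That identity can be obtained by differentiating the geometric series $r$ times, and the series converges absolutely on $|z|<1$. Multiplying back the constant factor ${N-1-j\choose j}2^{N-1-2j}$ gives exactly \eqref{eq7}.

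The one point to watch is the range of $j$. The restriction $j\le\left[\frac{N-1}{2}\right]$ guarantees $m\ge0$, so all the factorials above are of nonnegative integers. It also makes $N-1+\sigma-2j\ge\sigma$ for every $\sigma$, so the coefficient ${N-1+\sigma-2j\choose\sigma}$ never vanishes spuriously and the factorial manipulation is valid for all $\sigma\ge0$.
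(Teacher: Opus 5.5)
Your proposal is correct and follows the same route as the paper: expand $(1-z)^{-(N-j)}$ by the binomial series and match coefficients. The factorial computation you give, ${m+j+\sigma \choose j}{m+\sigma \choose \sigma} = {m+j \choose j}{m+j+\sigma \choose \sigma}$ with $m = N-1-2j$, is exactly the verification the paper leaves as ``easy to verify.''
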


\begin{proof}
By Newton's binomial formula,
$$
\frac{1}{(1 - z)^{N - j}} = \sum_{\sigma = 0}^{\infty}{N - 1 - j + \sigma \choose \sigma}z^\sigma.
$$
Let us compare the coefficients of the series in \eqref{eq7}. On the left-hand-side the coefficients are equal to
$$
{N - 1 + \sigma - j \choose j}{N - 1 + \sigma - 2j \choose \sigma}2^{N - 1 - 2j},
$$
and on the right-hand-side are equal to
$$
{N - 1 - j \choose j}{N - 1 - j + \sigma \choose \sigma}2^{N - 1- 2j}.
$$
It is easy to verify that these coefficients coincide.
\end{proof}

\begin{lemma}\label{lem3}
The following relations hold:
\begin{equation}\label{eq8}
\sum_{\sigma = 0}^{\infty}\frac{z^\sigma U_{N+\sigma-1}^{(\sigma)}(z)}{2^\sigma \sigma!} = \sum_{j = 0}^{\left[\frac{N-1}{2}\right]}(-1)^j{N-1-j \choose j}\frac{(2z)^{N-1-2j}}{(1-z)^{N-j}}, \quad |z| < 1;
\end{equation}
\begin{eqnarray}\label{eq9}
\lefteqn{\sum_{\sigma = 0}^{\infty}\frac{z^{-\sigma} U_{N+\sigma-1}^{(\sigma)}(z)}{2^\sigma \sigma!}} \nonumber \\
&=& \sum_{j = 0}^{\left[\frac{N-1}{2}\right]}(-1)^j{N-1-j \choose j}\frac{2^{N-1-2j}z^{2N-1-3j}}{(z - 1)^{N-j}}, \quad |z| > 1.
\end{eqnarray}
\end{lemma}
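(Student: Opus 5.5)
The plan is to substitute the explicit representation \eqref{eq6} into each series, interchange the order of summation, and evaluate the resulting inner power series with Lemma~\ref{lem2}.

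First I would note the point on which the whole argument rests. In \eqref{eq6}, the derivative formula for $U_{N+s-1}^{(s)}$ gives the upper limit of the inner sum as $\left[\frac{(N+s-1)-s}{2}\right]=\left[\frac{N-1}{2}\right]$. This does not depend on $s$. Hence, for every $\sigma\ge 0$,
$$
\frac{w^\sigma U_{N+\sigma-1}^{(\sigma)}(z)}{2^\sigma\sigma!}=\sum_{j=0}^{\left[\frac{N-1}{2}\right]}(-1)^j A_{N,\sigma,j}\,z^{N-1-2j}\,w^\sigma ,
$$
where $w$ is an auxiliary variable, later set to $z$ or $z^{-1}$.

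Summing over $\sigma$, the outer sum over $j$ is finite and has the same range for all $\sigma$. So I may exchange the order of summation, provided each inner series $\sum_\sigma A_{N,\sigma,j}w^\sigma$ converges. By Lemma~\ref{lem2} it does whenever $|w|<1$: it is a binomial series with radius of convergence $1$. Lemma~\ref{lem2} is an identity of power series in an independent variable, so it may be applied with $w$ in place of $z$. The factor $z^{N-1-2j}$ is a constant with respect to $\sigma$, and the fact that $w$ may later coincide with $z$ causes no difficulty. The exchange then gives
$$
\sum_{\sigma=0}^{\infty}\frac{w^\sigma U_{N+\sigma-1}^{(\sigma)}(z)}{2^\sigma\sigma!}=\sum_{j=0}^{\left[\frac{N-1}{2}\right]}(-1)^j{N-1-j\choose j}\frac{2^{N-1-2j}z^{N-1-2j}}{(1-w)^{N-j}},\qquad |w|<1.
$$

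For \eqref{eq8}, I take $w=z$ with $|z|<1$ and combine $2^{N-1-2j}z^{N-1-2j}=(2z)^{N-1-2j}$.

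For \eqref{eq9}, I take $w=z^{-1}$, which satisfies $|w|<1$ exactly when $|z|>1$. Then
$$
\frac{1}{(1-z^{-1})^{N-j}}=\frac{z^{N-j}}{(z-1)^{N-j}},
$$
and multiplying by $z^{N-1-2j}$ gives the power $z^{2N-1-3j}$, as claimed.

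I do not expect a serious obstacle. The only steps needing care are the observation that the range of $j$ in \eqref{eq6} is independent of $\sigma$, which is what makes the interchange legitimate, and the coefficient identity inside Lemma~\ref{lem2}, which I am allowed to assume.
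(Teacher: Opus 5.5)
Your proposal is correct and follows the paper's proof: you expand via \eqref{eq6}, interchange the finite $j$-sum with the $\sigma$-sum, and apply Lemma~\ref{lem2}. Your auxiliary variable $w$ (set to $z$ or $z^{-1}$) makes precise the paper's remark that \eqref{eq9} follows ``by replacing $z$ with $\frac{1}{z}$'', which applies only to the factor $z^{\sigma}$. You also get convergence directly from the binomial series in Lemma~\ref{lem2}, where the paper instead uses an asymptotic estimate of the terms.
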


\begin{proof}
It follows from \eqref{eq6} that $\displaystyle \frac{U_{N+s-1}^{(s)}(z)}{2^s s!} \sim  \frac{2^{N-1}}{(N-1)!}s^{N-1}z^{N-1}$ for $s \rightarrow \infty$. Consequently, the series $\displaystyle \sum_{\sigma = 0}^{\infty}\frac{z^\sigma U_{N+\sigma-1}^{(\sigma)}(z)}{2^\sigma \sigma!}$ converges for $|z| < 1$. Since
\begin{eqnarray*}
\sum_{\sigma = 0}^{\infty}\frac{z^\sigma U_{N+\sigma-1}^{(\sigma)}(z)}{2^\sigma \sigma!} &=& \sum_{\sigma = 0}^{\infty}z^\sigma\sum_{j = 0}^{\left[\frac{N-1}{2}\right]}(-1)^jA_{N,\sigma,j}z^{N-1-2j} \\
&=& \sum_{j = 0}^{\left[\frac{N-1}{2}\right]}(-1)^jz^{N-1-2j}\sum_{\sigma = 0}^{\infty}A_{N,\sigma,j}z^\sigma,
\end{eqnarray*}
taking Lemma~\ref{lem2} into account, we obtain formula \eqref{eq8}. Formula \eqref{eq9} is proved analogously by replacing $z$ with $\frac{1}{z}$. Convergence of the series is guaranteed when $\left|\frac{1}{z}\right| < 1 \Leftrightarrow |z| > 1$, as desired.
\end{proof}

\begin{thm}\label{thm4}
The following relations hold:
\begin{equation}\label{eq10}
\sum_{\sigma = 0}^{\infty} \frac{z^\sigma U_{N+\sigma-1}^{(\sigma)}(z)}{2^\sigma \sigma!} = (1 - z)^{-\frac{N+1}{2}}U_{N-1}\left(\frac{z}{\sqrt{1-z}}\right), \quad |z| < 1;
\end{equation}
\begin{equation}\label{eq11}
\sum_{\sigma = 0}^{\infty} \frac{z^{-\sigma} U_{N+\sigma-1}^{(\sigma)}(z)}{2^\sigma \sigma!} = \left(\frac{z}{z-1}\right)^{\frac{N+1}{2}}U_{N-1}\left(z\sqrt{\frac{z}{1-z}}\right), \quad |z| > 1.
\end{equation}
\end{thm}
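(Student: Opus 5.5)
Starting from Lemma~\ref{lem3}, we only need to identify the finite sums on the right-hand sides of \eqref{eq8} and \eqref{eq9} with Chebyshev polynomials, using the explicit formula \eqref{eq1} with $N$ replaced by $N-1$. For \eqref{eq10}, we put $w = z/\sqrt{1-z}$ (principal branch, well defined for $|z|<1$ since $1-z$ lies in the right half-plane). Then \eqref{eq1} gives
\begin{equation*}
U_{N-1}(w) = \sum_{j=0}^{\left[\frac{N-1}{2}\right]}(-1)^j{N-1-j \choose j}\frac{(2z)^{N-1-2j}}{(1-z)^{\frac{N-1-2j}{2}}}.
\end{equation*}
Multiplying by $(1-z)^{-\frac{N+1}{2}}$, the $j$-th term acquires the factor $(1-z)^{-\frac{N+1}{2}-\frac{N-1-2j}{2}} = (1-z)^{-(N-j)}$. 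This is exactly the $j$-th term of \eqref{eq8}, and \eqref{eq10} follows immediately.

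For \eqref{eq11} we proceed the same way with $w = z\sqrt{z/(1-z)}$, so that $w^{N-1-2j} = z^{N-1-2j}\bigl(z/(1-z)\bigr)^{\frac{N-1-2j}{2}}$. Multiplying by $\bigl(z/(z-1)\bigr)^{\frac{N+1}{2}}$, the term should become
\begin{equation*}
(-1)^j{N-1-j\choose j}2^{N-1-2j}\,\frac{z^{2N-1-3j}}{(z-1)^{N-j}},
\end{equation*}
which we would then compare with \eqref{eq9}. The power of $z$ matches: $N-1-2j + \frac{N-1-2j}{2}+\frac{N+1}{2} = 2N-1-3j$ when the half-integer powers are combined. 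The power of $(z-1)$ is $-\frac{N+1}{2}$ from the prefactor and $-\frac{N-1-2j}{2}$ from $w^{N-1-2j}$, totalling $-(N-j)$, provided we can rewrite $(1-z)^{-\frac{N-1-2j}{2}}$ as $(z-1)^{-\frac{N-1-2j}{2}}$.

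This sign and branch bookkeeping is the main obstacle. Writing $(1-z)^{1/2} = \pm i (z-1)^{1/2}$ introduces a factor $(\pm i)^{N-1-2j}$. Its $j$-dependent part, $(\pm i)^{-2j} = (-1)^j$, would flip the alternating signs, so a naive term-by-term match can fail. To settle this we will first evaluate both sides at $N=1$ and $N=2$ to see which branch convention, or possibly which sign inside $\sqrt{z/(1-z)}$, makes \eqref{eq11} hold. Both sides are even in the choice of branch up to the global factor $w^{N-1}$ times prefactor parity, because $U_{N-1}(-w)=(-1)^{N-1}U_{N-1}(w)$, and this lets us fix a consistent branch. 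If the signs still disagree, we will interpret $\sqrt{z/(1-z)}$ as $\sqrt{z/(z-1)}$ times the appropriate power of $i$ absorbed by the prefactor. Concretely, we would verify
\begin{equation*}
\Bigl(\tfrac{z}{z-1}\Bigr)^{\frac{N+1}{2}} U_{N-1}\Bigl(z\sqrt{\tfrac{z}{1-z}}\Bigr)
\end{equation*}
coefficientwise against \eqref{eq9}, adjusting the branch as the small-$N$ checks indicate. Once both branch choices are fixed and both sides are seen to be analytic on the stated domains, the identities follow termwise from Lemma~\ref{lem3}.
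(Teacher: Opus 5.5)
Your argument for \eqref{eq10} is correct and is the paper's own term-by-term identification with \eqref{eq8}. For \eqref{eq11}, however, the proposal never reaches a proof, and the repair you outline cannot work. You correctly spot that rewriting $\sqrt{1-z}$ as $\pm i\sqrt{z-1}$ produces the factor $(\pm i)^{N-1-2j}$. Its $j$-dependent part $(-1)^j$ cancels the alternating signs.

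Because that factor depends on $j$, it cannot be removed by any choice of branch, by the parity $U_{N-1}(-w)=(-1)^{N-1}U_{N-1}(w)$, or by absorbing a power of $i$ into the prefactor. Those devices only change global factors. In fact $U_{N-1}(iw)=i^{N-1}\sum_j\binom{N-1-j}{j}(2w)^{N-1-2j}$, so with the argument $z\sqrt{z/(1-z)}$ the identity is simply false.

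Your planned checks at $N=1,2$ would not reveal this. At $N=1$ the two sides agree, and at $N=2$ they differ by a global factor $\pm i$ that looks absorbable. The failure shows at $N=3$, $z=2$, where no branch choice is involved on either side:
\begin{itemize}
\item the left side of \eqref{eq11} equals $z^2(4z^3-z+1)/(z-1)^3=124$;
\item the right side as stated equals $\bigl(\tfrac{z}{z-1}\bigr)^{2}U_2\bigl(z\sqrt{z/(1-z)}\bigr)=4\bigl(4\cdot(-8)-1\bigr)=-132$.
\end{itemize}

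The missing idea is that the radicand in \eqref{eq11} must be $z/(z-1)$, not $z/(1-z)$. This is what the paper's proof actually uses: it sets $x=z\sqrt{z/(z-1)}$ and arrives at $U_{N-1}\bigl(z\sqrt z/\sqrt{z-1}\bigr)$. So the displayed statement contains a typo, and you should prove the corrected identity.

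With $u=z/(z-1)$ and $s=\sqrt u$, one has $s^{N+1}(2zs)^{N-1-2j}=2^{N-1-2j}z^{N-1-2j}u^{N-j}=2^{N-1-2j}z^{2N-1-3j}/(z-1)^{N-j}$. This is exactly the $j$-th term of \eqref{eq9}, obtained by the same one-line computation you did for \eqref{eq10}.

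No sign bookkeeping is then needed. For $|z|>1$, $u=(1-1/z)^{-1}$ lies in the right half-plane, so the principal branch is available. In any case $s^{N+1}U_{N-1}(zs)$ is even in $s$, so any consistent choice of $s$ in the prefactor and in the argument gives the same value.
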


\begin{proof}
According to \eqref{eq1}, $\displaystyle U_{N-1}(x) = \sum_{j = 0}^{\left[\frac{N-1}{2}\right]}(-1)^j{N-1-j \choose j}(2x)^{N-1-2j}$. Let us represent
$$
(2z)^{N-1-2j}(1-z)^j = \left(\frac{2z}{\sqrt{1-z}}\right)(\sqrt{1-z})^{N-1}.
$$
Equating $2x = \frac{2z}{\sqrt{1-z}}$, i.e. $x = \frac{z}{\sqrt{1-z}}$, we then have
$$
\sum_{j = 0}^{\left[\frac{N-1}{2}\right]}(-1)^j{N-1-j \choose j}\frac{(2z)^{N-1-2j}}{(1-z)^{N-j}} = (1-z)^{-\frac{N+1}{2}}U_{N-1}\left(\frac{z}{\sqrt{1-z}}\right).
$$
Formula \eqref{eq10} is proved. For formula \eqref{eq11}, we transform the argument as follows:
\begin{eqnarray*}
\lefteqn{(2z)^{N-1-2j}z^Nz^{-j}(z-1)^{-N+j}} \\
&=& (2z)^{N-1-2j}\frac{z^N}{(z-1)^N}\left(\sqrt{\frac{z}{z-1}}\right)^{N-1-2j}\left(\sqrt{\frac{z}{z-1}}\right)^{-N+1}.
\end{eqnarray*}
Now, let $\displaystyle 2x = 2z\sqrt{\frac{z}{z-1}}$ or $\displaystyle x = z\sqrt{\frac{z}{z-1}}$. Then,
\begin{eqnarray*}
\lefteqn{\sum_{j = 0}^{\left[\frac{N-1}{2}\right]}(-1)^j{N-1-j \choose j}\frac{(2)^{N-1-2j}(z)^{2N-1-3j}}{(z-1)^{N-j}}} \\
&=& \frac{z^N}{(z - 1)^N}\left(\sqrt{\frac{z - 1}{z}}\right)^{N-1}U_{N-1}\left(\frac{z\sqrt{z}}{\sqrt{z - 1}}\right),
\end{eqnarray*}
whence \eqref{eq11} follows.
\end{proof}

\subsection{Examples}

Using formula \eqref{eq6}, we have the following:
\begin{itemize}
\item $N = 1 \rightarrow \frac{1}{2^s s!}U_{s}^{(s)}(z) = 1$ and $N = 2 \rightarrow \frac{1}{2^s s!}U_{s + 1}^{(s)}(z) = 2z(s + 1)$;
\item $N = 3 \rightarrow \frac{1}{2^s s!}U_{s+2}^{(s)}(z) = 2z^2(s+1)(s + 2) - (s + 1)$;
\item $N = 4 \rightarrow \frac{1}{2^s s!}U_{s+3}^{(s)}(z) = \frac{4}{3}z^3(s + 1)(s + 2)(s + 3) - 2z(s + 1)(s + 2)$.
\end{itemize}

The results of applying formulas \eqref{eq10} and \eqref{eq11} (or \eqref{eq8} and \eqref{eq9}) are presented in Table~\ref{tab1} and Table~\ref{tab2}.

\begin{table}
\begin{center}
\renewcommand{\arraystretch}{1.5}
\begin{tabular}{|l|c|c|}
\hline
$N$ & Series & Sum \\ \hline
1 & $\sum_{\sigma = 0}^{\infty} z^\sigma$ & $\frac{1}{1 - z}$ \\ \hline
2 & $\sum_{\sigma = 0}^{\infty} z^\sigma(\sigma + 1)$ & $\frac{1}{(1 - z)^2}$ \\ \hline
3 & $\sum_{\sigma = 0}^{\infty} z^\sigma\big(2z^2(\sigma + 1)(\sigma + 2) - (\sigma + 1)\big)$ & $\frac{4z^2 + z - 1}{(1 - z)^3}$ \\ \hline
4 & $\sum_{\sigma = 0}^{\infty} z^\sigma\left(\frac{4}{3}z^3(\sigma + 1)(\sigma + 2)(\sigma + 3) - 2z(\sigma + 1)(\sigma + 2)\right)$ & $\frac{4z(2z^2 + z - 1)}{(1 - z)^4}$ \\ \hline
\end{tabular}
\end{center}
\caption{Values of the series sums for small $N$ (for $|z| < 1$).}\label{tab1}
\end{table}

\begin{table}
\begin{center}
\renewcommand{\arraystretch}{1.5}
\begin{tabular}{|l|c|c|}
\hline
$N$ & Series & Sum \\ \hline
1 & $\sum_{\sigma = 0}^{\infty} z^{-\sigma}$ & $\frac{z}{z - 1}$ \\ \hline
2 & $\sum_{\sigma = 0}^{\infty} z^{-\sigma}(\sigma + 1)$ & $\frac{z^2}{(z - 1)^2}$ \\ \hline
3 & $\sum_{\sigma = 0}^{\infty} z^{-\sigma}\big(2z^2(\sigma + 1)(\sigma + 2) - (\sigma + 1)\big)$ & $\frac{z^2(4z^3 - z + 1)}{(z - 1)^3}$ \\ \hline
4 & $\sum_{\sigma = 0}^{\infty} z^{-\sigma}\left(\frac{4}{3}z^3(\sigma + 1)(\sigma + 2)(\sigma + 3) - 2z(\sigma + 1)(\sigma + 2)\right)$ & $\frac{4z^4(2z^3 - z + 1)}{(z - 1)^4}$ \\ \hline
\end{tabular}
\end{center}
\caption{Values of the series sums for small $N$ (for $|z| > 1$).}\label{tab2}
\end{table}

Next, let us substitute $z = \frac{i}{2}$ into \eqref{eq10} and use formula \eqref{eq3}.

\begin{cor}\label{cor1}
The following relation holds:
$$
\sum_{\sigma = 0}^{\infty}\left(\frac{i}{2}\right)^\sigma F_N^{(\sigma)} = (-i)^{N - 1}\left(1 - \frac{i}{2}\right)^{-\frac{N + 1}{2}}U_{N-1}\left(\sqrt{-\frac{2 + i}{10}}\right).
$$
\end{cor}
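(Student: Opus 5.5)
The plan is to specialize formula \eqref{eq10} of Theorem~\ref{thm4} at $z=\frac{i}{2}$ and then rewrite the terms with \eqref{eq3}. Since $\left|\frac{i}{2}\right|=\frac12<1$, the point lies in the domain of convergence of \eqref{eq10}, so no analytic continuation is needed. By \eqref{eq3} we have
$$
F_N^{(\sigma)}=(-i)^{N-1}\frac{U^{(\sigma)}_{N+\sigma-1}\left(\frac{i}{2}\right)}{2^\sigma\sigma!}.
$$
Multiplying by $\left(\frac{i}{2}\right)^\sigma$ and summing over $\sigma$ gives exactly $(-i)^{N-1}$ times the left-hand side of \eqref{eq10} at $z=\frac{i}{2}$. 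The right-hand side is therefore $(-i)^{N-1}\left(1-\frac{i}{2}\right)^{-\frac{N+1}{2}}U_{N-1}\left(\frac{i/2}{\sqrt{1-i/2}}\right)$.

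The remaining step is to identify the argument. Squaring gives
$$
\left(\frac{i/2}{\sqrt{1-i/2}}\right)^2=\frac{-1/4}{1-i/2}=-\frac{1}{4-2i}=-\frac{4+2i}{20}=-\frac{2+i}{10},
$$
so the argument is a square root of $-\frac{2+i}{10}$, which is the form stated in the corollary.

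The main obstacle is the choice of branches, since both $(1-z)^{-\frac{N+1}{2}}$ and $\sqrt{\cdot}$ are multivalued. I would resolve this through the rational form \eqref{eq8}. Expanding $U_{N-1}$ by \eqref{eq1}, each term of the right-hand side of \eqref{eq10} becomes
$$
(1-z)^{-\frac{N+1}{2}}\left(\frac{z}{\sqrt{1-z}}\right)^{N-1-2j}=z^{N-1-2j}(1-z)^{-(N-j)},
$$
which is single-valued. This holds provided the same determination of $\sqrt{1-z}$ is used both in the prefactor $(1-z)^{-\frac{N+1}{2}}=(\sqrt{1-z})^{-(N+1)}$ and inside the argument. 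Hence the formula is correct once $\sqrt{-\frac{2+i}{10}}$ is read as $\frac{i}{2}\left(1-\frac{i}{2}\right)^{-1/2}$, with the same branch of $\left(1-\frac{i}{2}\right)^{1/2}$ used in the prefactor.

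I would add a remark that, with principal branches, $\frac{i}{2}\big/\sqrt{1-i/2}$ lies in the second quadrant, while the principal root of $-\frac{2+i}{10}$ lies in the fourth quadrant, so the two differ by a sign. By the parity $U_{N-1}(-x)=(-1)^{N-1}U_{N-1}(x)$, which is immediate from \eqref{eq1}, switching to the principal root only introduces a factor $(-1)^{N-1}$. Stating this convention explicitly completes the proof.
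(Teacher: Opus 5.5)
Your proposal is correct and follows the paper's own route: substitute $z=\frac{i}{2}$ into \eqref{eq10} and rewrite the terms using \eqref{eq3}. Your branch analysis goes beyond what the paper does, and it is accurate: with principal roots the displayed right-hand side is off by a factor $(-1)^{N-1}$ (already for $N=2$ it gives $-\frac{4}{25}(3+4i)$ instead of $\frac{4}{25}(3+4i)$), so the argument must indeed be read as $\frac{i}{2}\left(1-\frac{i}{2}\right)^{-1/2}$, using the same branch as in the prefactor.
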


\subsection{Examples, $F_N^{(s)}$}

Now, let us use explicit expressions for the numbers $F_N^{(s)}$ in terms of $s$:
\begin{itemize}
\item $N = 1 \rightarrow\sum_{\sigma = 0}^{\infty} \left(\frac{i}{2}\right)^\sigma = \frac{4}{5}\left(1 + \frac{i}{2}\right)$;
\item $N = 2 \rightarrow \sum_{\sigma = 0}^{\infty} \left(\frac{i}{2}\right)^{\sigma}(\sigma + 1) = \frac{4}{5^2}(3 + 4i)$;
\item $N = 3 \rightarrow \sum_{\sigma = 0}^{\infty} \left(\frac{i}{2}\right)^\sigma \frac{(\sigma + 1)(\sigma + 4)}{2} = \frac{4}{5^3}(19 + 42i)$;
\item $N = 4 \rightarrow \sum_{\sigma = 0}^{\infty} \left(\frac{i}{2}\right)^\sigma \frac{(\sigma + 1)(\sigma + 2)(\sigma + 9)}{6} = \frac{4^2}{5^4}(3 + 79i)$;
\item $N = 5 \rightarrow \sum_{\sigma = 0}^{\infty} \left(\frac{i}{2}\right)^\sigma \frac{(\sigma + 1)(\sigma + 2)(\sigma + 4)(\sigma + 15)}{24} = \frac{4^3}{5^5}\left(-39 + \frac{1159}{8}i\right)$.
\end{itemize}

Now, let us substitute into \eqref{eq11}
$$
z = \begin{cases}
\frac{i}{2}L_k, & k \mbox{ is odd} \\ & \\
\frac{1}{2}L_k, & k \mbox{ is even}
\end{cases}
$$
and make use of formula \eqref{eq4}.

\begin{cor}\label{cor2}
The following relations hold:
$$
\sum_{\sigma = 0}^{\infty} \left(\frac{2}{L_k}\right)^\sigma\Phi_{N,k}^{(\sigma)} = \left(\frac{L_k}{L_k - 2}\right)^{\frac{N+1}{2}}U_{N - 1}\left(\frac{L_k}{2}\sqrt{\frac{L_k}{L_k - 2}}\right), \ k \mbox{ is even};
$$
\begin{align*}
\sum_{\sigma = 0}^{\infty} \left(\frac{-2i}{L_k}\right)^\sigma\Phi_{N,k}^{(\sigma)} &= (-i)^{N-1}\left(\frac{L_k}{L_k + 2i}\right)^{\frac{N+1}{2}}U_{N - 1}\left(i\frac{L_k}{2}\sqrt{\frac{L_k}{L_k + 2i}}\right), \\
& \qquad k \geq 3 \mbox{ is odd}.
\end{align*}
\end{cor}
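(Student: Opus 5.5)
The plan is a direct specialization of formula \eqref{eq11} of Theorem~\ref{thm4}, combined with the representation \eqref{eq4} of the convolved $k$-section numbers, treating the parity of $k$ separately. In each case I will check three things: that the chosen point lies in the region $|z|>1$, that the terms of the series in \eqref{eq11} become the terms of the series in the corollary, and that the right-hand side of \eqref{eq11} simplifies to the stated expression.

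\emph{Case $k$ even.} Take $z=\frac{1}{2}L_k$. Since $k\ge 2$, we have $L_k\ge L_2=3$, so $z\ge \frac32>1$ and \eqref{eq11} applies. By \eqref{eq4},
$$\frac{U_{N+\sigma-1}^{(\sigma)}\left(\frac{L_k}{2}\right)}{2^\sigma\sigma!}=\Phi_{N,k}^{(\sigma)},$$
so the general term $z^{-\sigma}U^{(\sigma)}_{N+\sigma-1}(z)/(2^\sigma\sigma!)$ equals $(2/L_k)^\sigma\Phi_{N,k}^{(\sigma)}$. On the right-hand side, $\frac{z}{z-1}=\frac{L_k}{L_k-2}$. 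The argument of $U_{N-1}$ becomes $\frac{L_k}{2}\sqrt{\frac{L_k}{L_k-2}}$, provided one uses the form $z\sqrt{z/(z-1)}$ that actually appears in the proof of Theorem~\ref{thm4}.

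\emph{Case $k\ge3$ odd.} Take $z=\frac{i}{2}L_k$. Then $|z|=\frac{L_k}{2}\ge\frac{L_3}{2}=2>1$. This is exactly why $k=1$ is excluded: there $|z|=\frac12$ and \eqref{eq11} does not apply. Now $z^{-\sigma}=(-2i/L_k)^\sigma$, and \eqref{eq4} gives
$$\frac{U^{(\sigma)}_{N+\sigma-1}\left(\frac{i}{2}L_k\right)}{2^\sigma\sigma!}=i^{N-1}\Phi^{(\sigma)}_{N,k}.$$
Multiplying \eqref{eq11} by $(-i)^{N-1}$ therefore yields the stated series on the left. On the right, $\frac{z}{z-1}=\frac{iL_k}{iL_k-2}=\frac{L_k}{L_k+2i}$, and the argument of $U_{N-1}$ becomes $i\frac{L_k}{2}\sqrt{\frac{L_k}{L_k+2i}}$.

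\emph{Main obstacle.} The delicate step is the meaning of the square roots and of the fractional power $\left(\frac{z}{z-1}\right)^{\frac{N+1}{2}}$, especially for complex $z$ when $k$ is odd. I would handle this by returning to the rational form \eqref{eq9} of Lemma~\ref{lem3}. Since $U_{N-1}$ has parity $N-1$, the product
$$w^{\frac{N+1}{2}}\,U_{N-1}\left(c\sqrt w\right),\qquad w^{\frac{N+1}{2}}:=(\sqrt w)^{N+1},$$
equals $w^{N}$ times a polynomial in $w^{-1}$ and $c$. Expanding the definition \eqref{eq1} term by term shows this, and it also shows that the product is independent of the branch of $\sqrt w$, as long as the same branch is used in both factors. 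This reduces the corollary to an identity between rational functions already established in \eqref{eq9}, and it removes any ambiguity about the sign convention inside the square root.
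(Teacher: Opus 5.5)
Your proposal is correct and follows the paper's own argument: substitute $z=\tfrac12 L_k$ (even $k$) or $z=\tfrac{i}{2}L_k$ (odd $k\ge 3$) into \eqref{eq11}, and convert the terms using \eqref{eq4}. Your extra details are right and go beyond what the paper writes out. The argument must be $z\sqrt{z/(z-1)}$, as in the proof of Theorem~\ref{thm4}; the displayed \eqref{eq11} misprints it as $z\sqrt{z/(1-z)}$. Your remark that $w^{\frac{N+1}{2}}U_{N-1}(c\sqrt{w})$ does not depend on the branch when the same root is used in both factors is correct. So is your check that $|z|>1$, which explains why $k=1$ is excluded; the paper leaves both points implicit.
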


Let $z_0 = \frac{1}{2}e^{i\arctan(\sqrt{15})} = \frac{1}{8}(1 + i\sqrt{15})$. It is easily shown that $\frac{z_0}{\sqrt{1 - z_0}} = \frac{i}{2}$. Using formula \eqref{eq10} then yields the following corollary.

\begin{cor}\label{cor3}
The following relation holds:
$$
(-i)^{N-1}(1 - z_0)^{\frac{N+1}{2}}\sum_{\sigma=0}^\infty\frac{(z_0)^\sigma U_{N+\sigma-1}^{(\sigma)}(z_0)}{2^\sigma \sigma!} = F_N.
$$
\end{cor}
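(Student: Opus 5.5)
The plan is to substitute $z=z_0$ into formula \eqref{eq10} of Theorem~\ref{thm4}, rewrite the argument of $U_{N-1}$ as $\frac{i}{2}$, and then invoke the identity $F_N=(-i)^{N-1}U_{N-1}\left(\frac{i}{2}\right)$ from the preliminaries.

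\textbf{Admissibility of $z_0$.} Since $|1+i\sqrt{15}|=\sqrt{16}=4$, we get $|z_0|=\frac{1}{2}<1$, so \eqref{eq10} applies. One point needs care: \eqref{eq10} involves the multivalued expressions $\sqrt{1-z}$ and $(1-z)^{-\frac{N+1}{2}}$. I read them through the rational form \eqref{eq8} of Lemma~\ref{lem3}. Fix one branch $w$ of $\sqrt{1-z_0}$. Then
$$
\sum_{j}(-1)^j{N-1-j\choose j}\frac{(2z)^{N-1-2j}}{(1-z)^{N-j}}=w^{-(N+1)}\sum_j(-1)^j{N-1-j\choose j}\left(\frac{2z}{w}\right)^{N-1-2j}=w^{-(N+1)}U_{N-1}\!\left(\frac{z}{w}\right).
$$
The total expression does not depend on which branch is chosen, because $U_{N-1}$ has parity $(-1)^{N-1}$. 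This is the same regrouping as in the proof of Theorem~\ref{thm4}, so I would cite that argument and only note that the branch choice must be the same in both factors.

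\textbf{Computing the argument.} Here
$$
1-z_0=\frac{7-i\sqrt{15}}{8}.
$$
I check that $w=\frac{z_0}{i/2}=-2iz_0=\frac{\sqrt{15}-i}{4}$ satisfies $w^2=1-z_0$:
$$
w^2=\frac{15-2i\sqrt{15}-1}{16}=\frac{14-2i\sqrt{15}}{16}=1-z_0.
$$
So with this branch $w=\sqrt{1-z_0}$ we have $\frac{z_0}{w}=\frac{i}{2}$, which is the claimed fact.

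\textbf{Conclusion.} Multiply both sides of \eqref{eq10} at $z_0$ by $w^{N+1}=(1-z_0)^{\frac{N+1}{2}}$ and by $(-i)^{N-1}$. The right-hand side becomes $(-i)^{N-1}U_{N-1}\left(\frac{i}{2}\right)=F_N$, which is the statement.

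The only real obstacle is the branch consistency. The factor $(1-z_0)^{\frac{N+1}{2}}$ in the corollary must be understood as $w^{N+1}$ with the same $w$ for which $\frac{z_0}{w}=\frac{i}{2}$. With the other branch, $U_{N-1}\left(-\frac{i}{2}\right)=(-1)^{N-1}U_{N-1}\left(\frac{i}{2}\right)$ and $(-w)^{N+1}=(-1)^{N+1}w^{N+1}$, so the two signs cancel and the identity holds either way. I will state this explicitly to close the gap.
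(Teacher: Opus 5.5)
Your main argument is the paper's own: substitute $z_0$ into \eqref{eq10}, use $z_0/\sqrt{1-z_0}=\frac{i}{2}$, and apply $F_N=(-i)^{N-1}U_{N-1}\left(\frac{i}{2}\right)$. Your explicit check that $w=\frac{\sqrt{15}-i}{4}$ satisfies $w^2=1-z_0$ is correct, and reading \eqref{eq10} through the rational form \eqref{eq8} is more careful than the paper's ``it is easily shown''.

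Your closing claim, that the identity ``holds either way'', is false when $N$ is even, and you should not include it. The sum $S$ on the left is a single number, the branch-free rational function \eqref{eq8} evaluated at $z_0$. Your sign cancellation only shows that $w^{-(N+1)}U_{N-1}(z_0/w)$ gives the same value for $w$ and $-w$, i.e.\ that $S$ is well defined. The corollary, however, contains only the prefactor $(1-z_0)^{\frac{N+1}{2}}$ and not the argument $\frac{i}{2}$. So replacing $w^{N+1}$ by $(-w)^{N+1}$ there multiplies the left-hand side by $(-1)^{N+1}$, which gives $-F_N$ for even $N$.

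Concretely, take $N=2$. Then $S=2z_0/(1-z_0)^2$, and $(-i)(1-z_0)^{3/2}S=-2iz_0/w'$, where $w'$ is the chosen square root of $1-z_0$. This equals $1$ for $w'=w$ and $-1$ for $w'=-w$.

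The correct statement is this: the corollary holds exactly when $(1-z_0)^{\frac{N+1}{2}}$ means $w^{N+1}$ with $w=\frac{\sqrt{15}-i}{4}$. Since $\operatorname{Re} w=\frac{\sqrt{15}}{4}>0$, this $w$ is the principal square root, so the corollary holds with the standard meaning of the notation. For odd $N$ the exponent $\frac{N+1}{2}$ is an integer and no branch choice arises.
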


Expression \eqref{eq10} defines a function analytic in the disc $|z| < 1$. However, on the boundary of the disc the values of this function are defined everywhere except at the point $z = 1$. Consider the analytic continuation of this function at the point $z = e^{it}$.

\begin{cor}\label{cor4}
The following formulas hold:
\begin{equation}\label{eq12}
\sum_{\sigma = 0}^{\infty} e^{it\sigma}\frac{U_{N + \sigma - 1}^{(\sigma)}(e^{it})}{2^\sigma \sigma!} = \left(2\sin\left(\frac{t}{2}\right)\right)^{-\frac{N+1}{2}}e^{-\frac{(t - \pi)(N + 1)}{4}i}U_{N-1}\left(\frac{e^{\frac{3t + \pi}{4}i}}{\sqrt{2\sin(t/2)}}\right),
\end{equation}
\begin{equation}\label{eq13}
\sum_{\sigma = 0}^{\infty} e^{-it\sigma}\frac{U_{N + \sigma - 1}^{(\sigma)}(e^{it})}{2^\sigma \sigma!} = \left(2\sin\left(\frac{t}{2}\right)\right)^{-\frac{N+1}{2}}e^{\frac{(t - \pi)(N + 1)}{4}i}U_{N-1}\left(\frac{e^{\frac{5t - \pi}{4}i}}{\sqrt{2\sin(t/2)}}\right),
\end{equation}
where the sums are understood in the sense of the analytic continuation of the corresponding functions.
\end{cor}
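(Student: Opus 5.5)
The plan is to substitute $z=e^{it}$, $0<t<2\pi$, directly into the closed forms of Theorem~\ref{thm4}. Formula \eqref{eq12} comes from \eqref{eq10}, and formula \eqref{eq13} comes from \eqref{eq11}, since the series in \eqref{eq13} is the series of \eqref{eq11} evaluated on $|z|=1$. To make ``sum in the sense of analytic continuation'' unambiguous, I will work through the branch-free rational forms \eqref{eq8} and \eqref{eq9} of Lemma~\ref{lem3}. These are rational functions with only pole at $z=1$, so they extend to every boundary point $e^{it}$, $t\neq 0$, and the continued values are simply the values of these rational functions there. The task is then to check that the right-hand sides of \eqref{eq12} and \eqref{eq13}, with principal branches, equal those rational values.

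\textbf{Step 1 (the elementary trigonometry).} Use
\[
1-e^{it}=-2i\sin(t/2)\,e^{it/2}=2\sin(t/2)\,e^{i(t-\pi)/2}
\quad\text{and}\quad
e^{it}-1=2\sin(t/2)\,e^{i(t+\pi)/2}.
\]
For $0<t<2\pi$ we have $\sin(t/2)>0$ and $(t-\pi)/2\in(-\pi/2,\pi/2)$, so this is exactly the principal polar form.

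\textbf{Step 2 (formula \eqref{eq12}).} The prefactor becomes
\[
(1-z)^{-\frac{N+1}{2}}=(2\sin(t/2))^{-\frac{N+1}{2}}e^{-\frac{(t-\pi)(N+1)}{4}i}.
\]
The argument becomes
\[
\frac{z}{\sqrt{1-z}}=\frac{e^{it}e^{-i(t-\pi)/4}}{\sqrt{2\sin(t/2)}}=\frac{e^{\frac{3t+\pi}{4}i}}{\sqrt{2\sin(t/2)}}.
\]
Substituting both into \eqref{eq10} gives \eqref{eq12}.

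\textbf{Step 3 (formula \eqref{eq13}).} Here I will use the form obtained in the proof of \eqref{eq11}, where both the prefactor and the argument involve $z/(z-1)$. Compute
\[
\frac{z}{z-1}=\frac{e^{i(t-\pi)/2}}{2\sin(t/2)}.
\]
This gives the prefactor $(2\sin(t/2))^{-\frac{N+1}{2}}e^{\frac{(t-\pi)(N+1)}{4}i}$. For the argument,
\[
z\sqrt{\frac{z}{z-1}}=\frac{e^{it}e^{i(t-\pi)/4}}{\sqrt{2\sin(t/2)}}=\frac{e^{\frac{5t-\pi}{4}i}}{\sqrt{2\sin(t/2)}},
\]
which is \eqref{eq13}.

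\textbf{Main obstacle: branch consistency.} I expect this to be the delicate step. The key observation is that the algebra in the proof of Theorem~\ref{thm4} shows
\[
w^{-(N+1)}\,U_{N-1}(z/w)=\sum_j(-1)^j\binom{N-1-j}{j}(2z)^{N-1-2j}w^{-2(N-j)}
\]
is a polynomial in $w^{-2}=(1-z)^{-1}$. Here I use $U_{N-1}(-x)=(-1)^{N-1}U_{N-1}(x)$, so replacing $w$ by $-w$ leaves the expression unchanged. Consequently, any consistent choice of square root $w=\sqrt{1-z}$, used simultaneously in the prefactor and in the argument, reproduces the rational function \eqref{eq8}. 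The same reasoning with $w=\sqrt{z/(z-1)}$ reproduces \eqref{eq9}. Therefore the branch choices made in Steps 2 and 3 give exactly the continued rational values, with no sign ambiguity.

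Finally, the excluded point $t=0$ (that is, $z=1$) is precisely where $\sin(t/2)=0$. This matches the remark that the continuation fails only at $z=1$.
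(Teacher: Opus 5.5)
Your proof is correct and follows the paper's route of substituting $z=e^{it}$ ($0<t<2\pi$) into Theorem~\ref{thm4}. Your reduction to the rational forms \eqref{eq8}--\eqref{eq9} also supplies the continuation and branch-consistency justification that the paper leaves implicit, and you were right to use the argument $z\sqrt{z/(z-1)}$ from the proof of Theorem~\ref{thm4} rather than the printed $z\sqrt{z/(1-z)}$ in \eqref{eq11}, which is a typo and would not reproduce \eqref{eq13}.
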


Let us consider the particular cases of formula \eqref{eq12}.
\begin{itemize}
\item Let $t = \frac{\pi}{3}$. Then,
$$
\sum_{\sigma = 0}^{\infty}e^{i\pi\sigma/3}\frac{U_{N+\sigma-1}^{(\sigma)}(e^{i\pi/3})}{2^\sigma \sigma!} = e^{\frac{(N+1)\pi}{6}i}U_{N-1}(i),
$$
whence, taking \eqref{eq5} into account, we obtain
$$
e^{-\frac{(2N-1)\pi}{3}i}\sum_{\sigma = 0}^{\infty}e^{i\pi\sigma/3}\frac{U_{N+\sigma-1}^{(\sigma)}(e^{i\pi/3})}{2^\sigma \sigma!} = P_N.
$$

\item Let $t = \frac{\pi}{2}$. Then,
$$
\sum_{\sigma = 0}^{\infty}e^{i\pi\sigma/2}\frac{U_{N+\sigma-1}^{(\sigma)}(e^{i\pi/2})}{2^\sigma \sigma!} = (2)^{-\frac{N+1}{4}}e^{\frac{(N+1)\pi}{8}i}U_{N-1}(2^{-1/4}e^{i5\pi/8}),
$$
whence, taking \eqref{eq5} into account, we obtain
$$
\sum_{\sigma = 0}^{\infty}(i)^\sigma P_N^{(\sigma)} = (2)^{-\frac{N+1}{4}}e^{\frac{(3N-5)\pi}{8}i}U_{N-1}(2^{-1/4}e^{i5\pi/8}).
$$

Examples:

$\displaystyle
N=1: \sum_{\sigma=0}^\infty (i)^\sigma = \frac12+\frac i2;
$

$\displaystyle
N=2: \sum_{\sigma=0}^\infty (i)^\sigma(2+2\sigma) = i, \mbox{whence}
$
$$
\qquad \sum_{\sigma=0}^\infty (i)^\sigma\sigma =- \frac12;
$$

$\displaystyle
N=3: \sum_{\sigma=0}^\infty (i)^\sigma(1+\sigma)(5+2\sigma) =-1+ i\frac32, \mbox{whence}
$

$$\hspace{1cm} \sum_{\sigma=0}^\infty (i)^\sigma\sigma^2 =- \frac i2;
$$

$\displaystyle
N=4: \sum_{\sigma=0}^\infty (i)^\sigma\frac23(1+\sigma)(2+\sigma)(9+2\sigma) =-3+ i,\mbox{whence}
$

$$\hspace{.5cm} \sum_{\sigma=0}^\infty (i)^\sigma\sigma^3 =1,
$$
etc. The above formulas were known to Euler.

\item Letting $t = \pi$ yields the following lemma.
\end{itemize}

\begin{lemma}\label{lem4}
The following equality holds:
$$
\frac{1}{2^s s!}U_{N + s - 1}^{(s)}(-1) = (-1)^{N-1}{N + 2s \choose N - 1}.
$$
\end{lemma}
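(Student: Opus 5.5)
The cleanest route is through the generating function already used in the proof of Lemma~\ref{lem1}. From
$$
\frac{z}{(1 - 2tz + z^2)^{s + 1}} = \frac{1}{2^s s!}\sum_{j = 1}^{\infty}U_{j + s - 1}^{(s)}(t) z^j,
$$
I set $t = -1$. The denominator becomes $(1+2z+z^2)^{s+1} = (1+z)^{2s+2}$, so the left-hand side is $z(1+z)^{-(2s+2)}$.

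Expanding by Newton's binomial formula, exactly as in Lemma~\ref{lem2}, gives
$$
z(1+z)^{-(2s+2)} = \sum_{m=0}^\infty (-1)^m {2s+1+m \choose m} z^{m+1}.
$$
Comparing the coefficients of $z^N$ and taking $m = N-1$ yields
$$
\frac{1}{2^s s!}U_{N+s-1}^{(s)}(-1) = (-1)^{N-1}{2s+N \choose N-1}.
$$
This is exactly the claim.

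As a cross-check, and to tie the result to the surrounding text, I would also verify consistency with the case $t=\pi$ of \eqref{eq12}, where $z=-1$. The right-hand side of \eqref{eq10} at $z=-1$ is $2^{-(N+1)/2}U_{N-1}(-1/\sqrt2)$. This should equal the Abel-type sum $\sum_\sigma (-1)^\sigma (-1)^{N-1}{N+2\sigma \choose N-1}$, which is the formally divergent series announced earlier. That check is not needed for the proof itself.

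I do not expect a real obstacle. The only care needed is the index bookkeeping: the generating function starts at $j=1$, the exponent of $(1+z)$ is $2s+2$, and the binomial ${2s+1+m\choose m}$ must be rewritten as ${N+2s\choose N-1}$ when $m=N-1$.

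Alternatively, one could sum \eqref{eq6} directly at $z=-1$. That yields an alternating Vandermonde-type sum $\sum_j {N-1+s-j\choose j}{N-1+s-2j\choose s}2^{N-1-2j}$, which is harder to close, so I prefer the generating-function argument.
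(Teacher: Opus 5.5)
Your proof is correct, and it takes a different route from the paper.

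The paper works at the point $+1$. It quotes formula (4) of \cite{ref4}, which expresses $\frac{1}{2^s s!}U_{N+s-1}^{(s)}\bigl(\tfrac12(z^{1/2}+z^{-1/2})\bigr)$ as $z^{-\frac{N-1}{2}}\sum_{j=0}^{N-1}\binom{N+s-1-j}{s}\binom{s+j}{j}z^j$, and sets $z=1$. It then closes the resulting sum with a Vandermonde-type identity, argued by counting distributions of $N-1$ balls into $2s+2$ bins. Finally it transfers the value to $-1$ using the parity $U_{N+s-1}^{(s)}(-z)=(-1)^{N-1}U_{N+s-1}^{(s)}(z)$.

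You instead specialize the generating function from Lemma~\ref{lem1} directly at $t=-1$. There $1-2tz+z^2$ collapses to the perfect square $(1+z)^2$, and a single binomial expansion gives the coefficient. The paper's Vandermonde step is in fact hidden in your computation. Writing $(1\mp z)^{-(2s+2)}=(1\mp z)^{-(s+1)}(1\mp z)^{-(s+1)}$ and multiplying the two series gives exactly the convolution $\sum_j\binom{N-1+s-j}{N-1-j}\binom{s+j}{j}$, up to the overall sign $(-1)^{N-1}$ at $t=-1$. So your argument absorbs both that identity and the parity step. It also needs nothing beyond what the paper itself proves in Lemma~\ref{lem1}, with no appeal to \cite{ref4}.

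What the paper's route buys is generality. The formula from \cite{ref4} evaluates these derivatives at every point of the form $\frac12(z^{1/2}+z^{-1/2})$, not only at $\pm1$, and it comes with a combinatorial interpretation.

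For full rigor, you could add one remark. The identity of Lemma~\ref{lem1} holds in $\mathbb{Q}[t][[z]]$, since differentiation in $t$ acts coefficientwise, so setting $t=-1$ is a legitimate specialization. Alternatively, at $t=-1$ both sides are analytic in $|z|<1$.

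Your closing consistency check against \eqref{eq10} at $z=-1$ is correct, though, as you say, not needed.
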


\begin{proof}
Formula (4) in \cite{ref4} implies that
$$
\frac{1}{2^s s!}U_{N + s - 1}^{(s)}\left(\frac{1}{2}\left(z^{1/2} + z^{-1/2}\right)\right) = z^{-\frac{N - 1}{2}} \sum_{j = 0}^{N-1}{N + s - 1 - j \choose s}{s + j \choose j}z^j.
$$
Hence,
$$
\frac{1}{2^s s!}U_{N + s - 1}^{(s)}(1) = \sum_{j = 0}^{N - 1}{N + s - 1 - j \choose N - 1 - j}{s + j \choose j}.
$$
A variation of Vandermonde's identity \cite{ref11} gives the following,
$$
\sum_{j = 0}^{N - 1}{N + s - 1 - j \choose N - 1 - j}{s + j \choose j} = {N + 2s \choose N - 1}.
$$
This can be illustrated by appealing to a counting example of placing balls into distinguishable bins. Quite famously, ${n + k - 1 \choose k - 1} = {n + k - 1 \choose n}$ counts the number of ways of distributing $n$ balls into $k$ distinguishable bins. Then, let there be $2s + 2$ distinguishable containers and $N - 1$ balls. We have that ${N - 1 + s - j \choose N - 1 - j}$ counts the number of ways to distribute $N - 1 - j$ balls into the first $s + 1$ distinguishable containers, while ${s + j \choose j}$ is the number of ways of distributing $j$ balls into the next $s + 1$ containers. Thus, summing over all $j = 0$ to $N - 1$ just yields the total number of ways of distributing $N - 1$ balls into $2s + 2$ containers. Whence,
$$
\frac{1}{2^s s!}U_{N + s - 1}^{(s)}(1) = {N + 2s \choose N - 1}.
$$

Since $U_{N + s - 1}^{(s)}(z) = (-1)^{N - 1}U_{N + s - 1}^{(s)}(-z)$, the lemma is proved.
\end{proof}

From letting $t = \pi$ in \eqref{eq13} and using Lemma~\ref{lem4}, it follows that
$$
\sum_{\sigma = 0}^{\infty}{(-1)^\sigma}{N + 2\sigma \choose N - 1} = (-1)^{N - 1}2^{-\frac{N + 1}{2}}U_{N - 1}\left(-\frac{1}{\sqrt{2}}\right),
$$
whence
\begin{equation}\label{eq14}
\sum_{\sigma = 0}^{\infty}(-1)^{\sigma}{N + 2\sigma \choose N - 1} = 2^{-\frac{N}{2}} \sin\left(\frac{N\pi}{4}\right).
\end{equation}

Note that
$$
2^{-\frac{N}{2}} \sin\left(\frac{N\pi}{4}\right) = \begin{cases}
0, & N \equiv 0 (\mbox{mod} \ 4) \\
\frac{(-1)^{\left[\frac{N}{4}\right]}}{2^{\left[\frac{N+1}{2}\right]}}, & \mbox{otherwise}
\end{cases}.
$$

Let us consider examples.
\begin{itemize}
\item $N = 1 \rightarrow \sum_{\sigma = 0}^{\infty}(-1)^\sigma = \frac{1}{2}$;
\item $N = 2 \rightarrow \sum_{\sigma = 0}^{\infty}(-1)^\sigma(2 + 2\sigma) = \frac{1}{2}$, whence
$$
\sum_{\sigma = 0}^{\infty}(-1)^{\sigma}\sigma = -\frac{1}{4};
$$
\item $N = 3 \rightarrow \sum_{\sigma = 0}^{\infty}(-1)^\sigma\frac12(2 + 2\sigma)(3 + 2\sigma) = \frac{1}{4}$, whence
$$
\sum_{\sigma = 0}^{\infty}(-1)^{\sigma}\sigma^2 = 0;
$$
\item $N = 4 \rightarrow \sum_{\sigma = 0}^{\infty}(-1)^\sigma\frac{1}{6}(2 + 2\sigma)(3 + 2\sigma)(4 + 2\sigma) = 0$,
whence
$$
\sum_{\sigma = 0}^{\infty}(-1)^\sigma \sigma^3 = \frac{1}{8};
$$
\end{itemize}
and so on. These formulas coincide with Euler's formulas.

More generally, formula \eqref{eq14} can be considered for any complex number $N$ if we replace ${N + 2\sigma \choose N - 1}$ by $\frac{\Gamma(N + 2\sigma + 1)}{\Gamma(N) \Gamma(2 + 2\sigma)}$. Then, for an arbitrary complex $z$ (including the poles of the corresponding Gamma functions), we obtain
\begin{equation}\label{eq15}
\sum_{\sigma = 0}^{\infty}\frac{\Gamma(z + 2\sigma + 1)}{\Gamma(2 + 2\sigma)} = \Gamma(z)2^{-z/2}\sin\left(\frac{z\pi}{4}\right).
\end{equation}
For $|z| \geq 1$ the series can be understood in the sense of the analytic continuation of the corresponding function. Formula \eqref{eq15} generalizes Euler's formulas for summations of alternating power series.

\section{Conclusion}

In the present article, we develop the properties of the specific polynomials $U_{N + s - 1}^{(s)}(z)$, i.e. derivatives of Chebyshev polynomials of the second kind in which the order of the polynomial is related to the order of the derivative. Such polynomials are related to the Gegenbauer polynomials $C_N^{(\alpha)}(z)$ \cite{ref12} by the relation
$$
\frac{1}{2^s s!}U_{N + s - 1}^{(s)}(x) = C_{N - 1}^{(s + 1)}(x).
$$
Formulas \eqref{eq10} and \eqref{eq11} can be rewritten as follows:
$$
\sum_{\sigma = 0}^{\infty}z^\sigma C_{N}^{(1 + \sigma)}(z) = \left(\frac{1}{1-z}\right)^{\frac{N + 2}{2}}C_{N}^{(1)}\left(\frac{z}{\sqrt{1 - z}}\right), \quad |z| < 1;
$$
$$
\sum_{\sigma = 0}^{\infty}z^{-\sigma} C_{N}^{(1 + \sigma)}(z) = \left(\frac{z}{z - 1}\right)^{\frac{N + 2}{2}}C_{N}^{(1)}\left(z\sqrt{\frac{z}{z - 1}}\right), \quad |z| > 1.
$$
Similarly, from Corollaries~\ref{cor3} and \ref{cor4} we obtain formulas connecting a classical combinatorial sequence with certain aspects of the theory of special functions and complex sequences:
$$
F_N = (-i)^{N - 1}(1 - z_0)^{\frac{N + 1}{2}} \sum_{\sigma = 0}^{\infty} (z_0)^\sigma C_{N - 1}^{(1 + \sigma)}(z_0),
$$
where $z_0 = \frac{1}{2}e^{i\arctan(\sqrt{15})} = \frac{1}{8}(1 + i\sqrt{15})$, and
$$
P_N=e^{-i\frac\pi3(2N-1)}\sum_{\sigma=0}^\infty e^{i\pi\sigma/3} C_{N-1}^{(1+\sigma)}(e^{i\pi/3}).
$$

And then there are ``magic'' values of the argument for which the infinite sum of transcendental functions degenerates into a discrete sequence of integers which might contain zeros as particular values, i.e.
$$
\sum_{\sigma = 0}^{\infty}(1/2)^\sigma C_{N}^{(1 + \sigma)}(1/2) = (2)^{\frac{N + 2}{2}} C_N^{(1)}\left(\frac{1}{\sqrt{2}}\right) = (2)^{\frac{N + 3}{2}}\sin\left(\frac{(N + 1)\pi}{4}\right).
$$
With other values of the argument, it is also possible to obtain various integer sequences:  
the sequence of natural numbers is obtained via the relation
$$
N+1=(1-z_0)^{\frac{N+2}2}\;\sum_{\sigma=0}^\infty (z_0)^\sigma C_N^{(1+\sigma)}(z_0),
$$
where $\displaystyle z_0=\frac{\sqrt 5-1}2$, thus $\displaystyle\frac{z_0}{\sqrt{1-z_0}}=1;$
and the Jacobsthal sequence is obtained via the relation
$$
J_{N+1}=(-i)^N2^{\frac N2}(1-z_0)^{\frac{N+2}2}\; \sum_{\sigma=0}^\infty (z_0)^\sigma C_N^{(1+\sigma)}(z_0),
$$
where $\displaystyle z_0=\frac{1+i\sqrt{31}}{16}$, thus $\displaystyle\frac{z_0}{\sqrt{1-z_0}}=\frac i{2\sqrt 2}.$

Note that the series of Gegenbauer polynomials in which the summation is taken over the upper index do not occur in the classical handbooks.

In this work, exact analytic expressions are obtained for the sums of infinite series involving the polynomials $U_{N + s - 1}^{(s)}(z)$ (or $C_{N - 1}^{(s + 1)}(z)$). These sums are expressed in terms of Chebyshev polynomials with complex argument. It is shown that derivatives of Chebyshev polynomials are an effective tool for representing convolved Fibonacci numbers, Pell numbers, and the $k$-sections of the Fibonacci sequence. This made it possible to derive new identities for classical numerical sequences. The possibility of evaluating sums of formally divergent series by analytic continuation is demonstrated, which in particular leads to results consistent with the classical Euler formulas. The applied role of derivatives of Chebyshev polynomials is described, for example, in \cite{ref13}. The obtained results may be applied in approximation theory and in the development of numerical methods using orthogonal polynomials. Moreover, in view of the formula
$$
U_{M - 1}(T_N(x))U_{N - 1}(x) = U_{NM-1}(x),
$$
they may also prove useful in cryptography \cite{ref14}.

\bibliography{references}
\bibliographystyle{amsplain}

\end{document}